\theoremstyle{plain}
\newtheorem{theorem}{Theorem}
\newtheorem{lemma}[theorem]{Lemma}
\newtheorem{corollary}[theorem]{Corollary}
\theoremstyle{definition}
\newtheorem{remark}[theorem]{Remark}
\newcommand{\NN}{{\mathbb{N}}}
\title{Hammersley Point Sets and Inverse of Star-Discrepancy}
\author{Christian Wei\ss{}}
\date{\today}
\begin{document}

\maketitle

\begin{abstract} We establish the existence of $N$-point sets in dimension $d$ whose star-discrepancy is bounded above by $2.4631832 \sqrt{\frac{d}{N}}$, where the numerical constant improves upon all previously known bounds. This improvement is obtained by combining a recent result by Gnewuch on bracketing numbers in high dimensions with discrepancy bounds for Hammersley point sets due to Atanassov in dimensions $1 \leq d \leq 4$.
\end{abstract}

\section{Introduction}

The construction of sets with small (star-)discrepancy is of considerable interest from both a theoretical and practical standpoint, particularly in the context of high-dimensional numerical integration. Recall that the star-discrepancy of a point set $P \subset [0,1]^d$ with $N \in \mathbb{N}$ elements is for the $d$-dimensional Lebesgue measure $\lambda_d(\cdot)$  defined by
$$D_N^*(P) := \sup_{B \subset [0,1)^d} \left| \frac{\# \left\{ 1 \leq n \leq N \, : \, x_n \in B \right\}}{N} - \lambda_d(B) \right|,$$
where the supremum is taken over all $d$-dimensional intervals $B = [0,b_1) \times \ldots \times [0,b_d)$ with $0 \leq b_i \leq 1$ for $i=1,\ldots,d$. It is widely conjectured that $D_N^*(P) = O(\log(N)^{d-1}/N)$ is the best achievable order for the star-discrepancy of point sets in dimension $d$. However, this conjecture has only been proven in the cases $d\leq2$, see \cite{Sch72}. Point sets that exhibit the presumed optimal behavior of the star-discrepancy are called low-discrepancy point sets.\\[12pt]
As the bound $\log(N)^{d-1}/N$ for low-discrepancy point sets exponentially depends on the dimension, for large $d$ the enumerator is large in comparison to the denominator if $N$ is small. Therefore, the bound is hardly of any use in this setting. It thus makes sense to ask, what is the smallest star-discrepancy achievable for a given $N \in \mathbb{N}$ and to set
$$D^*(N,d) = \inf \{ D_N^*(P) \, : \, P \subset [0,1]^d, \# P = N \},$$
which is also called the $N$-th minimal star-discrepancy in dimension $d$. Furthermore, define the inverse star-discrepancy by
$$N^*(\varepsilon,d):= \inf \{ N \in \mathbb{N} \, : \, D^*(N,d) \leq \varepsilon\},$$
which is the minimum number of points that guarantees a star-discrepancy bound of at most $\varepsilon > 0$. In the seminal paper \cite{HNWW00}, a bound for the smallest achievable star-discrepancy of the form
\begin{align} \label{ineq:disc:sqrt}
D^*(N,d) \leq c \sqrt{\frac{d}{N}},
\end{align}
for some constant $c >0$ was shown without giving an explicit value for $c$. Currently, the best known value for the constant is $c = 2.4968$ according to \cite{GPW21}. In this note, we make the following improvement.
\begin{theorem}\label{thm:improved:c2} For any $d,N \in \mathbb{N}$ we have
$$D^*(N,d) \leq 2.4631832 \sqrt{\frac{d}{N}}$$
and hence
\[
N^*(\varepsilon,d) \le 6.0672715\,d\,\varepsilon^{-2}.
\]
\end{theorem}
The general structure of the proof for Theorem~\ref{thm:improved:c2} is in parallel to \cite{GPW21}. However, we add some additional ideas to achieve our improvement: First, we were able to slightly sharpen the arguments to derive the claim. Second, we include a very recent result on bracketing numbers from \cite{Gnewuch2024}. Last but not least, we employ Hammersley point sets along with bounds on their star-discrepancy from \cite{Ata04} to address the case $d \leq 4$ whereas previously only the case $d=1$ had been treated separately. It turns out that the star-discrepancy of finitely many Hammersley point sets must be computed individually for each dimension $d$ to complete the proof. Without utilizing the result from \cite{Ata04} the number of necessary computations in dimension 4 would be prohibitively large. Even with this result, additional arguments are needed to further reduce the computational effort, making it feasible to carry out the calculations on a standard computer. This double reduction in computational complexity constitutes the main new contribution of the present article. We will discuss the details in the next section.\\[12pt]
Due to the Koksma--Hlawka inequality, see, e.g., \cite{KN74}, the worst-case error when approximating the integral of a function $f \colon [0,1]^d \to \mathbb{R}$ by the average of its evaluations at a point set $\{x_1, \dots, x_N\}$ is bounded by the product of the variation of $f$ in the sense of Hardy and Krause and the star-discrepancy of the point set. Therefore, even small reductions of $N^*(\varepsilon,d)$ are of practical importance if an expensive function $f$, i.e., one that requires considerable computational effort, is evaluated at these points to (numerically) calculate an integral. Alternatively, decreasing the size of $N$ can be motivated by the numerical stability of certain regression problems which depend linearly on the star-discrepancy, see \cite{WN19}. Therefore, precise theoretical bounds and numerical estimates of $N^*(\varepsilon,d)$ are of great interest.
\paragraph{Acknowledgments.} The author would like to thank Francois Cl\'ement for discussions on the content of the paper and for providing the link to the C-code used to perform the calculations of the DEM algorithm. Moreover, he is grateful to Michael Gnewuch for his valuable comments on a preliminary version of this paper. The author also acknowledges the two referees for their careful reading and constructive suggestions, which led to significant improvements in the presentation.

\section{The main result} \label{sec:theory}

\paragraph{Halton Sequences.} As mentioned in the introduction, we will use Hammersley point sets to ensure that the bound $D_N^*(P) \leq 2.4631832 \sqrt{\frac{d}{N}}$ is satisfied for small dimensions $d$. They build upon Halton sequences which we will therefore introduce first. Recall that for an integer $b \geq 2$ the $b$-ary representation of $n \in \NN$ is $n = \sum_{j=0}^\infty e_j b^j$ with $0 \leq e_j = e_j(n) < b$. Based on the radical-inverse function, the van der Corput sequence in base $b$ is defined by $\varphi_b(n)=\sum_{j=0}^\infty e_j b^{-j-1}$ for all $n \in \NN$.
The Halton sequence $(H^b_d)$ in the bases $b=(b_1,\ldots,b_d)$ is then given by $x_n := (\varphi_{b_1}(n),\ldots,\varphi_{b_d}(n))$ for all $n \geq 1$, where $b_1,\ldots,b_d$ are pairwise relatively prime integers. It is well-known that Halton sequences satisfy for all $N \in \mathbb{N}$ and corresponding point sets $P=\{ x_1,\ldots,x_N \}$ the inequality $D_N^*(P) \leq C \log(N)^d/N$  with some constant $C > 0$, which is independent of $N$, see e.g. \cite[Theorem~3.6]{Nie92}.
\paragraph{Lifting of sequences.} Let $(x_n)$ be a sequence in $[0,1]^{d-1}$, and let $D_N^*(x_n)$ denote the $(d-1)$-dimensional star-discrepancy of the set consisting of its first $N$ elements. Then it is possible to construct a point \textit{set} in $[0,1)^d$ with almost the same $d$-dimensional star-discrepancy. This technique originally goes back to \cite{Rot54}. By defining $y_n=((n-1)/N,x_n) \in [0,1)^d$ for $n=1,\ldots,N$, and $P=\{y_1,\ldots,y_{N}\}$ we obtain
\begin{align*} D_N^*(P) \leq \max_{1 \leq M \leq N} \frac{M}{N} D_M^*(x_n) + \frac{1}{N},
\end{align*}
see \cite{Nie92}, Lemma 3.7. For the Halton sequence in bases $b$, this specific point set is known as Hammersley point set and denoted by $\textrm{Ham}_b(N)$. The bound then becomes
\begin{align} \label{ineq:hammer}
     D_N^*(P) \leq c_{d-1} \frac{\log(N)^{d-1}}{N} + \frac{1}{N}.
\end{align}
\paragraph{Bracketing numbers.} Furthermore, our proof relies on the concept of bracketing numbers which we introduce next. It serves as an important tool for finding bounds on the star-discrepancy, compare e.g. \cite{GPW21} and \cite{doerr2005bounds}. Here we follow the definition given in \cite{Gne08}: Let $A \subset [0,1]^d$ and $\delta \in (0,1]$. A finite set of points $\Gamma \subset [0,1]^d$ is called a $\delta$-cover of $A$, if for every $y \in A$, there exist $x,z \in \Gamma \cup \left\{ 0 \right\}$ such that $x \leq y \leq z$ (to be read component-wise) and $\lambda_d([0,z]) - \lambda_d([0,x]) \leq \delta$. For $x \leq y$ the interval $[x,y]$ is defined by $[x,y] := [x_1,y_1] \times [x_2,y_2] \times \ldots \times [x_d,y_d]$. The bracketing number $N_{[\ ]}(A,\delta)$ of $A$ is the smallest number of closed axis-parallel boxes (or brackets) of the form $[x,y]$ with $x,y \in [0,1]^d$, satisfying $\lambda_d([0,y]) - \lambda_d([0,x]) \le \delta$, whose union contains $A$. It is easily verified that the cardinality of a minimal $\delta$-cover of $A$, i.e., with the smallest possible number of sets, is bounded from above by $2N_{[\ ]}(A,\delta)$, compare \cite{Gne08} and \cite{GPW21}. The following result from \cite{Gnewuch2024} improved upon the previously best-known bounds for $d$-dimensional bracketing numbers.
\begin{lemma}[Gnewuch, \cite{Gnewuch2024}, Theorem 2.9] \label{thm:mg} The cardinality of the optimal $\delta$-bracketing cover can be estimated as
$$N_{[ \ ]}\left([0,1]^d,\varepsilon\right) \leq \frac{d^d}{d!}\left( \frac{1}{\delta} \right)^d $$
for $d \geq 3$.
\end{lemma}
\paragraph{Bounds on the star-discrepancy.} If $N$ is large in comparison to $d$, then theoretical star-discrepancy bounds for the Hammersley point set as a low-discrepancy point set yield a better bound than the one in Theorem~\ref{thm:improved:c2} anyhow. For any $d \in \mathbb{N}$, there remain finitely many $n=1,\ldots,N=N(d)$, where the known bounds on the star-discrepancy do not suffice to bound the star-discrepancy by $2.4631832 \sqrt{\frac{d}{n}}$. In these finitely many exceptions it is however possible to find sets which still have the desired property. Nonetheless, $N =N(d)$ is still too large to just naively calculate all necessary star-discrepancies, because the task would not be feasible on a usual computer due to the complexity of the calculation. Therefore, we will explain and use a theoretically justifiable trick which helps to speed up the necessary computer calculations, see Remark~\ref{rem:simplifcation:calculation}.\\[12pt]
The main step towards Theorem~\ref{thm:improved:c2} is the following Lemma~\ref{thm:bound:d5}, which is in large parts analogous to Theorem 3.5 in \cite{GPW21}. In that article, bounds on bracketing numbers were derived using a generalized Faulhaber inequality, and subsequently applied to obtain the mentioned bound on the star-discrepancy. The main difference here is the restriction to the case $d \geq 5$. The remaining cases $d \leq 4$ in the proof of Theorem~\ref{thm:improved:c2} can then be covered by Hammersley point sets and some additional ideas to reduce the required number of discrepancy calculations to be performed on a computer.
\begin{lemma} \label{thm:bound:d5} Let $d,N \in \mathbb{N}$ with $d \geq 5$. Let $X = (X_n)$ be a sequence of uniformly distributed, independent random variables on $[0,1]^d$. Then for every $c\geq 2.4631832$
$$D_N^*(X) \leq c \sqrt{\frac{d}{N}}$$
holds with probability at least $1 - e^{-(1.6728349c^2-10.1495427)\cdot d}$ implying that for every $q \in (0,1)$
$$D_N^*(X) \leq 0.7731673 \sqrt{10.1495427+ \frac{\log\left((1-q)^{-1}\right)}{d}} \cdot \sqrt{\frac{d}{N}}$$
holds with probability at least $q$.
\end{lemma}
\begin{proof}
    Let $\mu \in \mathbb{N}, \mu \geq 2$ be arbitrary and choose a $2^{-\mu}$-cover $\Gamma_\mu$ of minimum size. Recall that its cardinality is bounded by $2N_{[\ ]}(A,\delta)$. Applying Lemma~\ref{thm:mg} and the Stirling formula, see e.g. \cite[II.9]{Fel91}, thus implies
    $$|\Gamma_\mu| \leq 2\frac{1}{\sqrt{2 \pi d}} e^d 2^{\mu d}.$$
    As we want to avoid pure repetition, we will mention and use an intermediate result in the proof of Theorem 3.5 in \cite{GPW21}. To do so, we need to introduce three auxiliary variables. First we set
    $$c_{\mu} := \frac{1}{1-\sqrt{\frac{\mu + 1}{2\mu}}}$$
    for $\mu \in \mathbb{N}, \mu \geq 2$ and for $\tau \geq 0$ define
    $$c_1 := \sqrt{4 \tau \left( 1 + \frac{1}{3c_{\mu}} \right)}.$$
    Now we use the following lemma, which is essentially a combination of Lemma 3.4 and the beginning of the proof of Theorem 3.5 in \cite{GPW21}.
\begin{lemma}[\cite{GPW21}] \label{lem:GPW} Let $X=(X_n)$ be a sequence of uniformly distributed and independent random variables in $[0,1]^d$. Then for $c_0 \geq \sqrt{(\mu-\sigma)/2}$, the inequality 
$$D_N^*(X) \leq c_0 \left(1 + c_1 c_{\mu} \sqrt{\frac{\mu}{2^\mu}} \right) \sqrt{\frac{d}{N}}$$
is satisfied with probability at least
\begin{align} \label{ineq:lemma} 1 - \sqrt{\frac{2}{\pi d}} e^{-(2c_0^2-\mu+\sigma)d}\left( 1 + \frac{e^{-((\mu-\sigma)(\mu \tau -1)+(1-\log(2))\mu-\zeta-\sigma)d}}{1-e^{(-((\mu-\sigma)\tau -\log(2)))d}}\right),
\end{align}
where
$$0 \leq  \sigma \le \mu - d^{-1} \log\left(\frac{2|\Gamma_\mu|}{\sqrt{\frac{2}{\pi d}}}\right)$$
and 
$$\zeta = 1 + \log(2) + \frac{\log(2)}{d}.$$
\end{lemma}
In our situation the two quantities from the lemma are $0 \leq \sigma = \sigma(d) \leq \mu-\log(2^\mu)-1-\frac{\log(2)}{d}$ and $\zeta = 1 + \log(2) + \frac{\log(2)}{d}$. We now want to make sure that the expression in the bracket of \eqref{ineq:lemma} is $\leq \sqrt{\frac{\pi d}{2}}$, so that the probability becomes positive. This is true for $d=5$, $\mu=13$ and $\tau = 0.089494518961035$. We fix the values of $\mu$ and $\tau$ for the remainder of the proof. Moreover, we set $\sigma =  \sigma(5) = \mu-\log(2^\mu)-1-\frac{\log(2)}{5}$ for all dimensions which is possible according to Lemma~\ref{lem:GPW} since $d \geq 5$ and $\sigma(d)$ takes its smallest value for $d=5$. Note that the function in the bracket of \eqref{ineq:lemma} is monotonically decreasing in $d$, because the numerator of the fraction is monotonically decreasing in $d$ while the denominator is monotonically increasing in $d$. Hence, the bound $\leq \sqrt{\frac{\pi d}{2}}$ holds for all $d \geq 5$ and the lower bound on the probability becomes $1-e^{-(2c_0^2-\mu+\sigma)\cdot d}$. Realizing that $\mu-\sigma \approx 10.1495427$ and solving the relation
\[
c = c_0\left( 1+c_1 c_{\mu} \sqrt{\frac{\mu}{2^\mu}}\right)
\]
for $c_0$, we derive the claimed probability. Putting $q:=1-e^{-(1.6728349c^2-10.1495427)\cdot d}$ and solving for $c$ yields the second estimate for the star-discrepancy.
\end{proof}
This allows us to complete the proof of Theorem~\ref{thm:improved:c2}, in which we moreover use the following result of Atanassov in \cite{Ata04}, which we present here for Hammersley point sets, see also \cite[Theorem 3.46]{DP10}. 
\begin{lemma}[Atanassov, \cite{Ata04}] \label{thm:attanasov} Let $\mathrm{Ham}_b(N)$ be the $N$-element Hammersley point set in the bases $b = (b_1,\ldots,b_d)$. Then $D_N^*(\mathrm{Ham}_b(N))$ is bounded from above by
$$\left( \frac{1}{d!} \prod_{j=1}^d \left( \frac{\lfloor b_j/2 \rfloor \log(N)}{\log(b_j)} + d\right) + \sum_{k=0}^{d-1} \frac{b_{k+1}}{k!} \prod_{j=1}^k \left( \frac{\lfloor b_j/2 \rfloor \log(N)}{\log(b_j)}+k\right) +1 \right) \frac{1}{N}.$$
\end{lemma}
\begin{proof}[Proof of Theorem~\ref{thm:improved:c2}]
For $d=1$, the set $P := \{ 1/(2N), 3/(2N), \ldots , (2N-1)/(2N) \}$ satisfies $D_N^*(P) = \frac{1}{2N}$ which is stronger than the claim. In the case $d=2$, the Hammersley point set $\textrm{Ham}_2(N)$ in the base $2$ with $N$ elements satisfies
\[
D_N^*(\textrm{Ham}_2(N)) \leq D_N(\textrm{Ham}_2(N)) \leq \frac{7}{2N} + \frac{1}{2\log(2)} \frac{\log(N)}{N},
\]
according to the simple bound \cite[Theorem~3.8]{Nie92}, which is smaller than $2.463 \sqrt{\frac{2}{N}}$ for $N>1$. In dimension $d=3$, the Hammersley point set $\textrm{Ham}_{2,3}(N)$ in the bases $2, 3$ satisfies
$$D_N^*(\textrm{Ham}_{2,3}(N)) \leq \frac{3}{N} + \frac{1}{N} \left( \frac{1}{2\log(2)} \log(N) + \frac{3}{2}\right)\left( \frac{1}{\log(3)} \log(N) + 2\right)$$
which is smaller that $2.463 \sqrt{\frac{3}{N}}$ for $N > 28$. However, for $N \leq 28$, the actual star-discrepancy of the Hammersley point set can be calculated with the help of a computer (or also by hand if desired) to see that the claimed inequality is actually true for all $N \in \mathbb{N}$.\\[12pt]
For the case $d=4$ the mentioned theoretical bounds on the star-discrepancy of the Hammersley point set from \cite[Theorem~3.8]{Nie92} does not suffice because it would not be feasible to calculate the exact star-discrepancies for the missing $N$, even with a very fast computer. \\[12pt]
However, applying Lemma~\ref{thm:attanasov} implies that the Hammersley point set in the bases $2, 3, 5$ satisfies $D_N^*(\textrm{Ham}_{2,3,5}(N)) \leq 2.463\sqrt{\frac{4}{N}}$ for $N > 23,934$. The finitely many (relatively few) excluded point sets can now be checked on a computer to satisfy the desired inequality, see Remark~\ref{rem:simplifcation:calculation}. This completes the proof.
\end{proof}
Theoretically, our method could be extended to higher dimensions. For the case $d=5$ the  theoretical bound on the star-discrepancy of the Hammersley point set from Lemma~\ref{thm:attanasov} however guarantees $D_N^*(\textrm{Ham}_{2,3,5,7}(N)) \leq 2.463\sqrt{\frac{5}{N}}$ only for $N > 14,416,626$. Again, it would be necessary to check the remaining exceptions on a computer. As the runtime of the so-called DEM algorithm, which is the best exact algorithm to calculate the star-discrepancy, is of order $O(N^{d/2+1})$, see \cite{DEM96}, our approach hence seems to be infeasible from dimension $5$ on: as the dimension increases, the number of exceptions grows, and their star-discrepancies become progressively harder to compute. If we nonetheless assume that the approach could be conducted up to e.g. $d_0 = 9$, then the constant $c$ in \eqref{ineq:disc:sqrt} would go down to approximately $2.4543$.
\begin{remark} \label{rem:simplifcation:calculation} The following triangle inequality for the star-discrepancy accelerates the computer computation by eliminating the need to calculate star-discrepancies for many values of $N$: Suppose that we have a sequence $(x_n)$ and that $D_{N_0}^*(x_n)$ is known and that we want to calculate $D_{N_1}^*(x_n)$ for $N_1 = \alpha N_0$ with $\alpha > 1$ such that $N_1 \in \mathbb{N}$. Then according to \cite{KN74}, Theorem 2.6 on p. 115, it holds that
$$D_{N_1}^*(x_n) \leq \frac{1}{\alpha} D_{N_0}^*(x_n) + \frac{\alpha-1}{\alpha}.$$
Thus, the star-discrepancy can increase by at most $\frac{\alpha-1}{\alpha}$. If $b = 2.463\sqrt{\frac{d}{N}} - D_{N_0}^*(x_n)$, then we may choose $\alpha = 1/(1-b)$. For instance for $N_0=5,000$ and $d=4$ we have $D_{N_0}^*(\textrm{Ham}_{2,3,5}(N)) = 0.0045$ for the (four-dimensional) Hammersley point set in the bases $b=(2, 3, 5)$ and $2.463\sqrt{\frac{d}{N_0}} = 0.06966$. Hence we could directly jump to $N_1 = 5,348$ elements and calculate their star-discrepancy if we dealt with a sequence. The gain is that it is not necessary to calculate any discrepancy in the range $5,001 \leq  N \leq 5,347$.

For the Hammersley point set, we need however to account for the fact that it is not a sequence: we know that the star-discrepancy of the underlying Halton sequence in dimension $3$ is at most $D_{N_0}^*(\mathrm{Ham}_{2,3,5}(N))$ as well and the presented argument works for the Halton sequence when moving from $N_0$ to $N_1$. For the Hammersley point set with $N_1$ points, the star-discrepancy might be bigger by $1/N_1 < 1/N_0$ than the one of the Halton sequence according to \eqref{ineq:hammer}. Hence we may only choose $\alpha = (1-1/N_0)/(1-b)$. In our numerical example this means that we may only jump to $5,346$, which is however an acceptable difference.
\end{remark}
It was conjectured in \cite{NW10} that $n=10d$ points suffice to reach $D_N^*= 0.25$ in dimension $d$. Our result constitutes partial progress towards this conjecture by a simple application of Theorem~\ref{thm:improved:c2}.
\begin{corollary} For every $d \in \mathbb{N}$ there exists a point set $S_1$ with $N=98d$ elements such that $D_N^*(S_1) \leq 0.25$. Moreover, there exists a point set $S_2$ with $N=10d$ elements with $D_N^*(S_2) \leq 0.7789269$.
\end{corollary}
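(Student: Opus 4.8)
The plan is to derive both assertions as immediate specializations of Theorem~\ref{thm:improved:c2}, which already provides, for every pair $d,N$, an $N$-point configuration in dimension $d$ whose star-discrepancy is at most $2.4631832\sqrt{d/N}$. The only work is to substitute the prescribed cardinalities $N=98d$ and $N=10d$ and to check that the resulting constant drops below the two thresholds. The dimension cancels by design: with $N=98d$ the factor $\sqrt{d/N}$ becomes $1/\sqrt{98}$, and with $N=10d$ it becomes $1/\sqrt{10}$, so the bounds are uniform in $d$.

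For $S_1$ I would take $N=98d$, so that Theorem~\ref{thm:improved:c2} furnishes a set with
\[
D_N^*(S_1)\le 2.4631832\sqrt{\frac{d}{98d}}=\frac{2.4631832}{\sqrt{98}}.
\]
Checking $2.4631832/\sqrt{98}\le \tfrac14$ is, after squaring and clearing denominators, the elementary inequality $2.4631832^2\le 98/16=6.125$, which holds comfortably since $2.4631832^2\approx 6.0673$. Entirely analogously, for $S_2$ I would take $N=10d$, giving
\[
D_N^*(S_2)\le \frac{2.4631832}{\sqrt{10}},
\]
and the stated figure $0.7789269$ is exactly this quantity $2.4631832/\sqrt{10}$ evaluated to seven decimal places, so the verification reduces to a single scalar evaluation of that square root.

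A point worth spelling out is that Theorem~\ref{thm:improved:c2} bounds the \emph{infimum} $D^*(N,d)$ rather than the discrepancy of one fixed configuration, so I must argue that an honest point set realizing the bound exists. For $S_1$ this is automatic because the bound is \emph{strict}, $2.4631832/\sqrt{98}<\tfrac14$, so by definition of the infimum some configuration has discrepancy below $1/4$. In general I would invoke the standard fact that the infimum defining $D^*(N,d)$ is attained — the configuration space $([0,1]^d)^N$ is compact and $P\mapsto D_N^*(P)$ is lower semicontinuous — whence the optimal set itself inherits the displayed estimates.

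There is essentially no analytic obstacle: the whole content is the substitution into Theorem~\ref{thm:improved:c2} followed by two scalar comparisons, and the cardinalities $98d$ and $10d$ are precisely the inverse-star-discrepancy values obtained by inverting $c\sqrt{d/N}\le\varepsilon$ into $N\ge c^2 d/\varepsilon^2$, which at $\varepsilon=\tfrac14$ reads $N\ge 97.08\,d$ (rounded up to $98d$) and at $\varepsilon\approx 0.7789269$ reads $N\ge 10d$. The only place that demands genuine care is this last rounding: since $N=10d$ is the boundary case, one must retain enough significant digits and round $2.4631832/\sqrt{10}$ in the upward direction to be certain the reported seven-digit constant is a valid upper bound rather than a hair below the true value.
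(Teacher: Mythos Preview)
Your argument is correct and follows exactly the paper's route: substitute $N=98d$ and $N=10d$ into Theorem~\ref{thm:improved:c2} and read off the constants. The additional care you take about attainment of the infimum and the rounding in the $N=10d$ case merely fills in details the paper leaves implicit.
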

With the value $c=2.4968$ from \cite{GPW21}, we would only obtain a set size $N=100d$ for $S_1$ and $D_N^*(S_2) \leq 0.78956$.

\bibliography{literatur}
\bibliographystyle{abbrv}

\end{document}